\date{}
\begin{document}
\newtheorem{thrm}{Theorem}[section]
\newtheorem{cor}[thrm]{Corollary}
\newtheorem{lem}[thrm]{Lemma}
\newtheorem{mainthrm}{Theorem}
\def\theequation{\arabic{section}.\arabic{equation}}
\setcounter{equation}{0}

\setlength{\parindent}{0in} 
\renewcommand{\baselinestretch}{1.4}
\newcommand\ds{\displaystyle}

\title{Asymptotic zero distribution of a class of hypergeometric polynomials}
\author[K. A.~Driver]{K. A.~Driver} 
\address{K. A.~Driver\\ Department of Mathematics and Applied Mathematics\\ University of Cape Town, Private Bag X3, Rondebosch 7700, South Africa} 
\email{kathy.driver@uct.ac.za} 

\author[S. J.~Johnston]{S. J.~Johnston} 
\address{S. J.~Johnston\\ School of Mathematics\\ University of the Witwatersrand, Private Bag 3, Wits 2050, Johannesburg, South Africa} 
\email{sarahjane.johnston@wits.ac.za} 

\thanks{Research by the first author is supported by the National Research Foundation under grant number 2053730.}

\begin{abstract}
We prove that the zeros of ${}_2F_1\left(-n,\frac{n+1}{2};\frac{n+3}{2};z\right)$ asymptotically approach the section of the lemniscate $\left\{z: \left|z(1-z)^2\right|=\frac{4}{27}; \textrm{Re}(z)>\frac{1}{3}\right\}$ as $n\rightarrow \infty$.  In recent papers (cf. \cite{KMF}, \cite{orive}), Mart\'inez-Finkelshtein and Kuijlaars and their co-authors have used Riemann-Hilbert methods to derive the asymptotic zero distribution of Jacobi polynomials $P_n^{(\alpha_n,\beta_n)}$ when the limits $\ds A=\lim_{n\rightarrow \infty}\frac{\alpha_n}{n}$ and $\ds B=\lim_{n\rightarrow \infty}\frac{\beta_n}{n}$ exist and lie in the interior of certain specified regions in the $AB$-plane.  Our result corresponds to one of the transitional or boundary cases for Jacobi polynomials in the Kuijlaars Mart\'inez-Finkelshtein classification.
\end{abstract}

\maketitle
\pagenumbering{arabic}
\setcounter{page}{1}

{\small
\emph{Mathematics Subject Classication: 33C05, 30C15.}\\
\emph{Key words: Asymptotic zero distribution, Hypergeometric polynomials, Jacobi polynomials.}}

\section{Introduction}

The general hypergeometric function is defined by

\[{}_pF_q(a_1, \ldots ,a_p;b_1, \ldots ,b_q;z)=\sum_{k=0}^{\infty} \frac{(a_1)_k \ldots (a_p)_k}{(b_1)_k \ldots (b_q)_k}\; \frac{z^k}{k!}\quad , \quad |z|<1 \]

where
\[(\alpha)_k=
\begin{cases}
\alpha(\alpha+1) \ldots (\alpha+k-1)& \quad , \quad k\geq 1, \medspace\\
1& \quad , \quad k=0,\; \alpha\neq 0
\end{cases} \]

\bigskip
is Pochhammer's symbol.  When one of the numerator parameters is equal to a negative integer, say $a_1 = -n, \; n\in \mathbb{N}$, the series terminates and the function reduces to a polynomial of degree $n$ in $z$.\\

The Jacobi polynomial $P_n^{(\alpha,\beta)}(x)$ can be defined in terms of a ${}_2F_1$ hypergeometric polynomial (cf. \cite{rain}, p. 254), viz, $$P_n^{(\alpha,\beta)}(z)=\frac{(1+\alpha)_n}{n!}\; {}_2F_1 \left(-n,1+\alpha +\beta +n; 1+\alpha; \frac{1-z}{2}\right).$$ The study of the zeros of Jacobi polynomials therefore gives direct information about the zeros of the corresponding ${}_2F_1$ hypergeometric polynomials and vice versa.\\

In \cite{orive}, Mart\'inez-Finkelshtein, Mart\'inez-Gonz\'alez and Orive consider the asymptotic zero distribution of the Jacobi polynomials $P_n^{(\alpha_n,\beta_n)}$ where the limits 

\begin{equation}\label{limAB}
A=\lim_{n\rightarrow \infty} \frac{\alpha_n}{n} \quad \textrm{ and } \quad B =\lim_{n\rightarrow \infty} \frac{\beta_n}{n}
\end{equation}

exist.  They distinguish five cases depending on the values of $A$ and $B$ and prove results for the ``general'' cases where $A$ and $B$ lie in the interior of certain regions in the plane.\\

The boundary lines are ``non-general'' cases, some of which have been studied (cf. \cite{boggsduren}, \cite{borweinchen}, \cite{moller}, \cite{driverduren}, \cite{durenguillou}), always in the context of the corresponding hypergeometric function.\\

We find the asymptotic zero distribution of ${}_2F_1\left(-n,\frac{n+1}{2};\frac{n+3}{2}; z \right)$ as $n\to\infty$, which corresponds to a non-general case not previously studied. Our results, taken in conjunction with those in \cite{boggsduren}, \cite{moller}, \cite{driverduren} and \cite{durenguillou} suggest that there may be a general result for the asymptotic zero distribution of the class of Jacobi polynomials $P_n^{(\alpha_n,\beta_n)}$ where the limits in (\ref{limAB}) are $$\lim_{n\rightarrow \infty}\frac{\alpha_n}{n}=k  \textrm{ and } \lim_{n\rightarrow \infty}\frac{\beta_n}{n}= -1.$$

\bigskip
We shall prove the following theorem.

\begin{mainthrm}
The zeros of the hypergeometric polynomial $${}_2F_1\left(-n, \frac{n+1}{2}; \frac{n+3}{2};z \right)$$ approach the section of the lemniscate $$\left\{z: \left|z(1-z)^2\right|=\frac{4}{27}; \textrm{Re}(z)>\frac{1}{3}\right\},$$ as $n\rightarrow \infty$.
\end{mainthrm}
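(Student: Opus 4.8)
\emph{Strategy and the integral representation.}
I would reduce the theorem to a steepest–descent analysis of a single real integral, in the spirit of the cited works of Duren and collaborators, and then locate the zeros through the competition between an interior saddle of the integrand and the endpoint $s=1$. Since the lower parameter exceeds the middle numerator parameter by exactly $1$, the Euler integral collapses to ${}_2F_1(-n,b;b+1;z)=b\int_0^1 t^{b-1}(1-zt)^n\,dt$; taking $b=\tfrac{n+1}{2}$ and substituting $t=s^2$ gives
$$F_n(z):={}_2F_1\!\left(-n,\tfrac{n+1}{2};\tfrac{n+3}{2};z\right)=(n+1)\int_0^1\big[s(1-zs^2)\big]^n\,ds=(n+1)\int_0^1 g_z(s)^n\,ds,\qquad g_z(s):=s-zs^3.$$
This exhibits $F_n$ as a polynomial of degree exactly $n$ in $z$ whose large-$n$ behaviour is governed by the phase $g_z$; and since $g_z(s)>0$ for $0<s<1$ whenever $z\le 1$, it shows outright that $F_n$ is zero-free on $(-\infty,1]$, consistent with a limiting zero set contained in $\{\operatorname{Re}z>\tfrac13\}$.

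\emph{The saddle/endpoint balance and the lemniscate.}
The candidate dominant points for $\int_0^1 g_z^n$ are the endpoint $s=1$, where $g_z(1)=1-z$, and the zeros $s_\pm(z)=\pm(3z)^{-1/2}$ of $g_z'$, where $g_z(s_\pm)^2=\tfrac{4}{27z}$; the endpoint $s=0$ is irrelevant since $g_z(0)=0$. A Watson/Laplace estimate at $s=1$ yields a contribution of order $(1-z)^n/n$, while an admissible interior saddle contributes at order $g_z(s_+)^n/\sqrt n$, and these have equal modulus precisely when $\tfrac{4}{27|z|}=|1-z|^2$, i.e. on the lemniscate $|z(1-z)^2|=\tfrac{4}{27}$. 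Two observations pin down the relevant arc: $s_+(z)=1$ exactly when $z=\tfrac13$, and $|s_+(z)|\le 1\iff|z|\ge\tfrac13$; moreover the line $\operatorname{Re}z=\tfrac13$ meets the lemniscate only at $z=\tfrac13$, so the lemniscate decomposes there into a loop around $z=0$ (lying in $\operatorname{Re}z<\tfrac13$) and a loop around $z=1$ (lying in $\operatorname{Re}z>\tfrac13$ and passing through $z=\tfrac43$).

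\emph{Which saddle is active — the main obstacle.}
The core of the argument is the analysis of the critical graph of $g_z$ — the trajectories $\operatorname{Im}\log g_z=\mathrm{const}$ issuing from $s_\pm$ — which decides, for $z$ near the lemniscate, whether $[0,1]$ can be deformed to a descent path through $s_+(z)$. I expect: for $z$ on and near the loop in $\operatorname{Re}z>\tfrac13$ the saddle $s_+(z)$ is ``inside'' ($|s_+|\le1$) and is picked up by the descent contour, giving a genuine two-term expansion
$$F_n(z)=\frac{(1-z)^n}{n}\left[B(z)+\sqrt n\,A(z)\left(\frac{g_z(s_+)}{1-z}\right)^{\!n}\big(1+o(1)\big)\right]$$
with $A,B$ analytic and non-vanishing; whereas for $z$ on and near the loop in $\operatorname{Re}z<\tfrac13$ the saddle sits beyond the endpoint, cannot be reached by an admissible deformation, and only the $s=1$ term survives, $F_n(z)=B(z)(1-z)^n n^{-1}(1+o(1))\ne0$. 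Carrying out this Stokes bookkeeping uniformly in $z$ — and in particular handling its degeneration at the node $z=\tfrac13$, where $s_+(z)$ coalesces with the endpoint $s=1$ and $g_z'(1)\to0$, which calls for a uniform (Airy/Bleistein-type) estimate — is the hard part.

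\emph{From the asymptotics to the zeros.}
Granting the previous step: away from the loop in $\operatorname{Re}z>\tfrac13$ one term strictly dominates, so $F_n(z)\ne0$ for all large $n$ and no zeros accumulate there; while in a neighbourhood of that loop the bracketed factor equals $B(z)+\sqrt n\,A(z)e^{in\theta(z)}(1+o(1))$ on the loop itself (where $|g_z(s_+)/(1-z)|=1$, which is exactly the lemniscate equation), and this can vanish only within distance $O(n^{-1}\log n)$ of the loop. Since $F_n$ has exactly $n$ zeros, all of them lie in this shrinking neighbourhood; an argument-principle count around the loop, using that $e^{in\theta(z)}$ winds $\sim n$ times as $z$ traverses it, confirms that these are all the zeros and that the normalized zero-counting measures converge to a probability measure supported on $\{|z(1-z)^2|=\tfrac{4}{27},\ \operatorname{Re}z>\tfrac13\}$, which is the assertion of the theorem.
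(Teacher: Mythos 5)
Your overall strategy is the same as the paper's (both follow the Duren--Guillou method): the representation $(n+1)\int_0^1\left[t(1-zt^2)\right]^n dt$, the competition between the endpoint $t=1$ and the saddle $1/\sqrt{3z}$, and the lemniscate arising from balancing $|1-z|$ against $2/\sqrt{27|z|}$. But what you yourself label ``the hard part'' is exactly the substance of the proof, and you leave it as an expectation, so there are genuine gaps. First, your proxy criterion ``$|s_+(z)|\le 1 \iff |z|\ge\frac13$'' is not the correct dividing condition for whether the saddle is picked up: what matters is the topology of the level curves of $f_z(t)=t(1-zt^2)$, and the right condition for the endpoint $1$ to lie in the basin of the zero $1/\sqrt{z}$ (so that $[0,1]$ deforms into the steepest path through the saddle) is $\mathrm{Re}(\sqrt{z})>1/\sqrt{3}$, i.e.\ $z$ to the right of a parabola with vertex $\frac13$ and intercepts $\pm\frac23 i$ --- not a disk $|z|\ge\frac13$. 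The paper settles this with its ``continental divides/basins'' lemma (rotate the $t$-plane by $\sqrt{z}/|z|$); without an argument of this kind your Stokes bookkeeping is unsupported. Second, your claim that on and near the left loop $F_n(z)=B(z)(1-z)^n n^{-1}(1+o(1))\ne 0$ needs a uniform, quantitative lower bound on the endpoint contribution. The paper obtains it by parametrizing the ascent path by $f_z(t)=r(1-z)$, splitting at $\rho<1$, killing the piece over $[0,\rho]$ with the a priori bound $|z|<n+1$ (Enestr\"om--Kakeya), and showing via the M\"obius map $\zeta\mapsto(1-\zeta)/(1-3\zeta)$ that $\mathrm{Re}\{(1-zt^2)t/(1-3zt^2)\}>\frac16$ near $r=1$ whenever $\mathrm{Re}(z)<\frac13$ and $|z-\frac13|\ge\epsilon$. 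Nothing in your sketch plays this role, and some a priori control of $|z_{nj}|$ is needed anyway for your ``one term strictly dominates away from the loop'' assertion to be uniform in $z$.

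Two further remarks. The uniform Airy/coalescence analysis at $z=\frac13$ that you flag as necessary is in fact avoided in the paper: it excises a disk $|z-\frac13|\ge\epsilon$ and accepts that it cannot rule out zeros converging to the point $\frac13$, which lies on the closure of the arc, so the theorem as stated survives. Also, you do not need a Laplace approximation at the saddle: the straight segment from $0$ to $1/\sqrt{z}$ is precisely the steepest path through $1/\sqrt{3z}$, and the integral over it is an exact Beta integral, $\frac{1}{2(\sqrt{z})^{n+1}}\,\Gamma\!\left(\tfrac{n+1}{2}\right)\Gamma(n+1)/\Gamma\!\left(\tfrac{3n+3}{2}\right)$, so Stirling alone yields the saddle term $\frac{\sqrt{2\pi}}{3\sqrt{n}}\left(\tfrac{2}{\sqrt{27}}\right)^n(\sqrt{z})^{-(n+1)}\left(1+O(1/n)\right)$; this is tidier and removes one asymptotic estimate from the argument. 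Finally, your closing statement about convergence of the normalized zero-counting measures is stronger than the theorem and is not justified by the argument-principle sketch; the paper proves only that the zeros approach the stated arc, which is all the theorem asserts.
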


Our method, which follows the same approach as that used in \cite{durenguillou}, involves the asymptotic analysis of an integral of the form 

\begin{equation}\label{genint}
A_n\; \int_{0}^{1} \left[f_z(t)\right]^n\; dt
\end{equation}

where $A_n$ is a constant involving $n$ and $f_z(t)$ is a polynomial in the complex variable $t$ and analytic in $z$.\\

Expressing ${}_2F_1\left(-n,\frac{n+1}{2};\frac{n+3}{2};z\right)$ in the form given in (\ref{genint}) can be done either by substituting $t^2$ for $t$ in the Euler integral formula for ${}_2F_1$ functions (cf. \cite{rain}, p. 47), given by $${}_2F_1(a,b;c;z)=\frac{\Gamma(c)}{\Gamma(b)\Gamma(c-b)}\; \int_{0}^{1}t^{b-1}(1-t)^{c-b-1}(1-zt)^{-a}\;dt,$$ or, more directly, using an integral representation for ${}_3F_2$ hypergeometric functions, namely, (cf. \cite{johnston2}, Cor 2.2) \[{}_3F_2\left(-n,\frac{b}{2},\frac{b+1}{2};\frac{c}{2},\frac{c+1}{2};z\right)=\frac{\Gamma (c)}{\Gamma (b) \Gamma (c-b)}\int^{1}_{0} t^{b-1} (1-t)^{c-b-1} (1-zt^2)^n dt,\] for  $\text{Re}(c)>\text{Re}(b)>0$.

\bigskip
Putting $b=n+1$ and $c=n+2$, we obtain 

\begin{equation}\label{smiley}
{}_2F_1\left(-n,\frac{n+1}{2};\frac{n+3}{2};z\right) = (n+1) \int^{1}_{0} \left[t(1-zt^2)\right]^n dt= (n+1) \int^{1}_{0}\left[f_z(t)\right]^n dt
\end{equation}

where $f_z(t)=t(1-zt^2)$ is a polynomial in the complex variable $t$ and analytic in $z$.\\

\medskip
We shall denote the two branches of the square root of $z$ by $\pm\sqrt{z}$ where $\sqrt{z}$ is the branch with $\sqrt{1}=1$ and the square root $\sqrt{z}$ is holomorphic on the plane cut along the negative semi-axis.  Note that the function $f_z(t)$ has zeros at $t=0$ and $t=\pm\frac{1}{\sqrt{z}}$ while the critical points $f_z'(t)=0$ occur at $t=\pm\frac{1}{\sqrt{3z}}$.

\section{Preliminary results}

In order to prove our main result, we will need the following lemmas.\\

First, we recall a version of the classical Enestr\"om-Kakeya theorem (cf. \cite{marden} p.136): If $0<a_0<a_1<\ldots <a_n$, then all zeros of the polynomial $p(z)=a_0+a_1z+\ldots +a_nz^n$ lie in the unit disk $|z|<1$.

\begin{lem}\label{enestromk}
The zeros of ${}_2F_1\left(-n,\frac{n+1}{2};\frac{n+3}{2};z\right)$ are contained in the disk $|z|<n+1$.
\end{lem}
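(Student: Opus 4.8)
The plan is to write the hypergeometric polynomial out coefficient by coefficient and then apply the Eneström--Kakeya theorem recalled just above, after rescaling so that the coefficients become positive and increasing. Since the denominator parameter $\frac{n+3}{2}$ exceeds the numerator parameter $\frac{n+1}{2}$ by exactly $1$, the quotient of Pochhammer symbols telescopes, $\bigl(\tfrac{n+1}{2}\bigr)_k\big/\bigl(\tfrac{n+3}{2}\bigr)_k=\frac{n+1}{n+2k+1}$, and together with $(-n)_k=(-1)^k\,n!/(n-k)!$ this gives
\[
p_n(z):={}_2F_1\!\left(-n,\tfrac{n+1}{2};\tfrac{n+3}{2};z\right)=\sum_{k=0}^{n}(-1)^k\binom{n}{k}\frac{n+1}{n+2k+1}\,z^k .
\]
The coefficients alternate in sign, so I would work instead with $q_n(w):=p_n\!\bigl(-(n+1)w\bigr)=\sum_{k=0}^{n}b_k\,w^k$, where $b_k=\binom{n}{k}\frac{(n+1)^{k+1}}{n+2k+1}>0$. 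A complex number $z_0$ is a zero of $p_n$ if and only if $-z_0/(n+1)$ is a zero of $q_n$, so it is enough to show that every zero of $q_n$ lies in $|w|<1$.

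By the Eneström--Kakeya theorem this holds provided $0<b_0<b_1<\dots<b_n$. Since
\[
\frac{b_{k+1}}{b_k}=(n+1)\,\frac{n-k}{k+1}\cdot\frac{n+2k+1}{n+2k+3},
\]
the inequality $b_{k+1}>b_k$ is equivalent to $g(k):=(n+1)(n-k)(n+2k+1)-(k+1)(n+2k+3)>0$. One checks that $g$ is a concave quadratic in $k$ (its leading coefficient is $-2(n+2)$) with $g(0)=(n-1)(n^2+3n+3)$ and $g(n-1)=n-1$, both positive for $n\ge 2$; by concavity $g(k)>0$ for all integers $k$ with $0\le k\le n-1$. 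Hence the $b_k$ are strictly increasing and positive, Eneström--Kakeya places all zeros of $q_n$ in $|w|<1$, and therefore all zeros of $p_n$ lie in $|z|<n+1$. (Only $n\ge 2$ --- indeed, only large $n$ --- is needed in what follows.)

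There is no serious obstacle here: the closed form for $p_n$ is an exact evaluation of the terminating series, and the only step requiring a little care is the monotonicity of the rescaled coefficients, which reduces to the elementary concave-quadratic inequality $g(k)>0$ on $[0,n-1]$.
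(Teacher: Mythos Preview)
Your proof is correct and follows essentially the same route as the paper: substitute $z=-(n+1)w$ to make the coefficients positive, verify they are strictly increasing, and apply Enestr\"om--Kakeya. Your argument is in fact more detailed than the paper's --- you simplify the coefficients via the telescoping identity $\bigl(\tfrac{n+1}{2}\bigr)_k/\bigl(\tfrac{n+3}{2}\bigr)_k=\tfrac{n+1}{n+2k+1}$ and verify monotonicity for every $k$ via the concave quadratic $g$, whereas the paper only checks the single ratio $|c_n/c_{n-1}|$ and asserts (without justification) that this implies monotonicity of all consecutive ratios.
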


\begin{proof}
Let $$F_n(z) = {}_2F_1 \left(-n, \frac{n+1}{2}; \frac{n+3}{2}; z \right) = c_0+c_1 z+\ldots +c_n z^n,$$ where
\begin{equation}\label{coeff}
c_m = \frac{(-n)_m \left(\frac{n+1}{2}\right)_m}{\left(\frac{n+3}{2}\right)_m m!}
\end{equation}

A straightforward computation shows that $$\left|\frac{c_n}{c_{n-1}}\right|> \frac{1}{n+1}, \quad \textrm{for } n>1,$$ which implies that $$\frac{-(n+1)c_m}{c_{m-1}}>1, \qquad m=1,2,\ldots , n.$$ It follows immediately that the coefficients of the polynomial
\begin{align*}
p(z)=F_n\left(-(n+1)z\right)&=c_0- c_1(n+1)z+\ldots + (-1)^n(n+1)^nz^n\\
&= a_0+ a_1z +\ldots + a_n z^n
\end{align*}

are positive and increasing: $0<a_0<a_1<\ldots <a_n$.  By the Enestr\"om-Kakeya theorem, the zeros of $F_n\left(-(n+1)z\right)$ lie in the unit disk $|z|<1$ and therefore the zeros of $F_n\left(z\right)$ lie in the disk $|z|<n+1.$
\end{proof}

\bigskip
\begin{lem}\label{lastlem}
The polynomial ${}_2F_1\left(-n,\frac{n+1}{2};\frac{n+3}{2};z\right)$ has at least one zero outside the unit circle $|z|=1$.
\end{lem}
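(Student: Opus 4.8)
The plan is to argue by contradiction: suppose every zero of $F_n(z) = {}_2F_1\left(-n,\frac{n+1}{2};\frac{n+3}{2};z\right)$ lies in the closed unit disk $|z|\le 1$. Since $F_n$ is a polynomial of degree $n$ with $F_n(0)=c_0=1$, writing $F_n(z)=c_n\prod_{j=1}^n(z-z_j)$ gives the product formula for the constant term, $1 = c_0 = c_n\prod_{j=1}^n(-z_j)$, hence
\[
|c_n| = \prod_{j=1}^n \frac{1}{|z_j|}.
\]
If all $|z_j|\le 1$ this forces $|c_n|\ge 1$. So it suffices to compute $c_n$ from \eqref{coeff} and show $|c_n|<1$ for every $n\ge 1$ (or at least for all sufficiently large $n$, together with a direct check of the small cases — in fact even one value of $n$ with a zero outside suffices to state the lemma, but the clean route is a uniform bound).

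The key computation is therefore the evaluation of the leading coefficient. From \eqref{coeff},
\[
c_n = \frac{(-n)_n\left(\tfrac{n+1}{2}\right)_n}{\left(\tfrac{n+3}{2}\right)_n\, n!}
= (-1)^n\,\frac{\left(\tfrac{n+1}{2}\right)_n}{\left(\tfrac{n+3}{2}\right)_n},
\]
using $(-n)_n=(-1)^n n!$. Now $\left(\tfrac{n+1}{2}\right)_n$ and $\left(\tfrac{n+3}{2}\right)_n$ are products of $n$ consecutive half-integer factors differing by a fixed shift of $1$, so the ratio telescopes:
\[
\frac{\left(\tfrac{n+1}{2}\right)_n}{\left(\tfrac{n+3}{2}\right)_n}
= \frac{\tfrac{n+1}{2}}{\tfrac{n+1}{2}+n} = \frac{n+1}{3n+1}.
\]
Hence $|c_n| = \dfrac{n+1}{3n+1} < 1$ for all $n\ge 1$. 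This contradicts $|c_n|\ge 1$, so at least one zero satisfies $|z_j|>1$.

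I do not expect a genuine obstacle here; the only thing to be careful about is the telescoping identity for the Pochhammer ratio (writing $\left(\tfrac{n+1}{2}\right)_n=\tfrac{n+1}{2}\cdot\tfrac{n+3}{2}\cdots\tfrac{3n-1}{2}$ and $\left(\tfrac{n+3}{2}\right)_n=\tfrac{n+3}{2}\cdot\tfrac{n+5}{2}\cdots\tfrac{3n+1}{2}$, all interior factors cancel, leaving $\tfrac{n+1}{2}$ in the numerator and $\tfrac{3n+1}{2}$ in the denominator) and the use of $F_n(0)=1$, which is immediate since the constant term of a ${}_2F_1$ series is $1$. An alternative, slightly less computational route would invoke Lemma \ref{enestromk} in reverse, but the direct leading-coefficient estimate above is the shortest path and makes the statement quantitative.
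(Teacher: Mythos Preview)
Your proof is correct and follows essentially the same approach as the paper: both arguments use Vieta's relation $|c_0/c_n|=\prod_j |z_j|$ together with the telescoping computation $|c_0/c_n|=\tfrac{3n+1}{n+1}>1$ (equivalently your $|c_n|=\tfrac{n+1}{3n+1}<1$) to force at least one zero of modulus greater than $1$. The only cosmetic difference is that you phrase it as a contradiction while the paper states the product-of-zeros identity directly.
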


\begin{proof}
We have 
\begin{align*}
{}_2F_1\left(-n,\frac{n+1}{2}; \frac{n+3}{2};z\right)&= c_0+c_1 z+\ldots +c_n z^n\\
&=c_n(z-k_1)(z-k_2)\ldots (z-k_n)
\end{align*}

where $k_j$ is the $j^{\textrm{th}}$ zero of the polynomial, $j=1,2,\ldots ,n$.  Now $$c_0=c_nk_1k_2\ldots k_n(-1)^n$$ so that $$|k_1k_2\ldots k_n|=\left|\frac{c_0}{c_n}\right|.$$ Also, from (\ref{coeff}), we see that $$\left|\frac{c_0}{c_n}\right| = \frac{3n+1}{n+1} >1,$$ and so the product of the zeros has modulus greater than 1.  Therefore at least one zero of the polynomial ${}_2F_1\left(-n, \frac{n+1}{2}; \frac{n+3}{2}; z\right)$ must be outside the unit circle $|z|=1$.
\end{proof} 

\medskip
\begin{lem}\label{basins}
If Re$(\sqrt{z})>\frac{1}{\sqrt{3}}$, the function $|f_z(t)|$ given in (\ref{smiley}) has a unique path of steepest ascent from $\frac{1}{\sqrt{z}}$ to 1.  If $0< \textrm{Re} (\sqrt{z})<\frac{1}{\sqrt{3}}$, there is a  unique path of steepest ascent from 0 to 1.  
\end{lem}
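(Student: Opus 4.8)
The plan is to remove the parameter $z$ by a change of variable. Writing $w=\sqrt z$ and substituting $s=wt$ gives $f_z(t)=t\bigl(1-(wt)^2\bigr)=w^{-1}g(s)$ with the \emph{fixed} cubic $g(s)=s-s^3=s(1-s)(1+s)$. Hence $\arg f_z(t)=\arg g(s)-\tfrac12\arg z$ and $|f_z(t)|=|z|^{-1/2}|g(s)|$, so the map $t\mapsto s=wt$ carries the paths of steepest ascent of $|f_z|$ to those of $|g|$ and back. Under it the marked points become $t=0\mapsto s=0$, $\ t=\tfrac1{\sqrt z}\mapsto s=1$, $\ t=1\mapsto s=w$, and the critical points $t=\pm\tfrac1{\sqrt{3z}}$ become $s=\pm\tfrac1{\sqrt3}$, the zeros of $g'$. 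Thus everything reduces to a single question about $g$: on which side of a suitable separatrix does the steepest-descent path of $|g|$ issuing from $w$ end at the zero $s=1$, and on which side does it end at $s=0$?

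Next I would describe the steepest paths of $|g|$. Since $\log|g|=\operatorname{Re}\log g$ is harmonic off the zeros of $g$, its steepest-ascent/descent paths are exactly the arcs of the level curves of the harmonic conjugate $\arg g$, oriented so that $|g|$ increases, and such a path can terminate only at a zero of $g$ or at a zero of $g'$. So the first step is to locate the separatrices $\{g\in\mathbb R\}$: writing $s=u+iv$ one has $\operatorname{Im}g(s)=v(1-3u^2+v^2)$, so
\[\{\,s:g(s)\in\mathbb R\,\}=\{v=0\}\cup\{3u^2-v^2=1\},\]
the real axis together with the rectangular hyperbola through $\pm\tfrac1{\sqrt3}$ with asymptotes $v=\pm\sqrt3\,u$; these cross only at the two saddles $s=\pm\tfrac1{\sqrt3}$, whose (real) critical values are $\pm\tfrac{2}{3\sqrt3}$. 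Checking the sign of $g$ piecewise on the real segments and on the two branches of the hyperbola identifies where $\arg g\equiv0$ and where $\arg g\equiv\pi$, and shows $|g|$ is strictly monotone along each separatrix arc away from the saddles.

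These separatrices cut the plane into six regions, three up to the symmetries $s\mapsto\bar s$ and $s\mapsto-s$ (under which $g(\bar s)=\overline{g(s)}$, $g(-s)=-g(s)$). I would focus on the two relevant ones in the closed upper half-plane: the region between the two upper branches of the hyperbola, whose boundary contains the zero $s=0$; and the region to the right of the upper-right branch, whose boundary contains $s=1$. On the finite part of the boundary of each, $\arg g$ is constant ($0$ or $\pi$); it runs through a whole interval of values only as $s$ tends to the boundary zero and along the arc at infinity. A boundary-values (maximum-principle) argument for the harmonic function $\arg g$ then shows that \emph{every} level curve of $\arg g$ meeting the first region runs from $s=0$ to $\infty$, and every one meeting the second runs from $s=1$ to $\infty$. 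Consequently each point $w$ interior to one of these regions is joined to exactly one zero of $g$ by a steepest-ascent path, and that path is unique; points $w$ lying on the separatrix itself are handled directly, using the monotonicity of $|g|$ along the real axis and the hyperbola from the previous step. Pulling back by $t=s/\sqrt z$ then yields, in each of the two cases, the unique steepest-ascent path of $|f_z|$ ending at $t=1$ and issuing from the appropriate zero $t=\tfrac1{\sqrt z}$, respectively $t=0$.

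The main obstacle is the global step in the third paragraph: showing that the separatrices genuinely separate the two basins — in particular that a level curve of $\arg g$ through the target point $w$ cannot wind around to the \emph{other} zero of $g$ or behave recurrently near $\infty$. This is exactly where the explicit description of the separatrix set $\{v=0\}\cup\{3u^2-v^2=1\}$ and the fact that $g$ has only two critical values, both real, are used; once the topology of the level curves of $\arg g$ is established, the remainder is bookkeeping. A secondary point needing care is the degenerate case in which $z$ (hence $w$) lies on a separatrix, so that the naive steepest-descent path from $t=1$ runs into a critical point; there one argues by hand along the real axis and the hyperbola.
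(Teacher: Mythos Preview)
Your reduction to the fixed cubic $g(s)=s-s^{3}$ via $s=\sqrt{z}\,t$ is clean, and your identification of the separatrix set $\{v=0\}\cup\{3u^{2}-v^{2}=1\}$ is correct. The paper argues quite differently and more heuristically: it simply asserts that the straight lines through the saddles $\pm\,1/\sqrt{3z}$ perpendicular to the segment $[-1/\sqrt{z},\,1/\sqrt{z}]$ are ``continental divides'' between the three basins, then rotates by $\sqrt{z}/|z|$ to read off which basin contains $t=1$.

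Your more careful analysis in fact exposes a defect in the statement itself. In the normalized $s$-plane the basin of $s=1$ is bounded by the hyperbola branch through $(1/\sqrt3,0)$, not by the vertical line $u=1/\sqrt3$; hence the steepest descent from $s=w=\sqrt{z}$ reaches $s=1$ precisely when $3(\operatorname{Re}\sqrt z)^{2}-(\operatorname{Im}\sqrt z)^{2}>1$ with $\operatorname{Re}\sqrt z>0$, a condition strictly stronger than $\operatorname{Re}\sqrt z>1/\sqrt3$. For instance $w=0.6+0.5i$ has $\operatorname{Re} w>1/\sqrt3$ but $3u^{2}-v^{2}=0.83<1$, and indeed $\operatorname{Im} g(w)>0$, so $w$ lies in the $0$-basin. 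Thus the first clause of the lemma, as written, is false, and your programme will not---and cannot---establish it; the paper's proof hides this by misidentifying the separatrices as straight lines. The second clause is fine, since $0<u<1/\sqrt3$ forces $3u^{2}-v^{2}<1$, hence $w$ into the $0$-basin.

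None of this damages the paper's applications. Sections~3 and~4 invoke the lemma only under the stronger hypotheses $\operatorname{Re} z<\tfrac13$ and $\operatorname{Re} z>\tfrac13$; writing $\sqrt z=u+iv$ one has $\operatorname{Re} z=u^{2}-v^{2}>\tfrac13\Rightarrow 3u^{2}-v^{2}=2u^{2}+(u^{2}-v^{2})>\tfrac23+\tfrac13=1$, which is exactly the hyperbola condition. So the corrected version your argument \emph{does} prove is what the rest of the paper actually needs.
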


\begin{figure}[!hbt]
\includegraphics[scale=0.65]{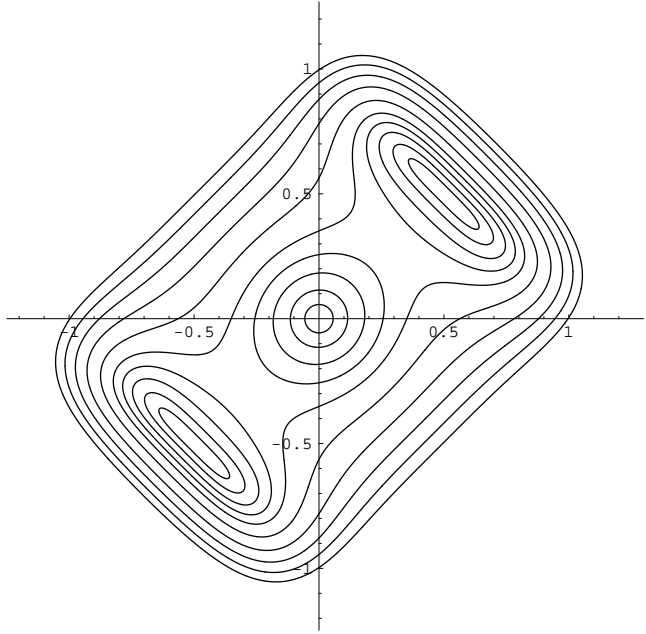}
\caption{The level curves of $f_z(t)$ \label{contour}}
\end{figure}

\begin{proof}
First we note that $|f_z(1)|>\left|f_z\left(\frac{1}{\sqrt{3z}}\right)\right| \Leftrightarrow |z(1-z)^2|>\frac{4}{27}$ and that the equivalence for the reverse inequality also holds.\\

The lines through the saddle-points $t=\pm \frac{1}{\sqrt{3z}}$ perpendicular to the linear segment from $-\frac{1}{\sqrt{z}}$ through 0 to $\frac{1}{\sqrt{z}}$ are ``continental divides'' that separate the $t$-plane into three basins containing $-\frac{1}{\sqrt{z}}$, 0 and $\frac{1}{\sqrt{z}}$ respectively.\\

Any point in the 0-basin is joined to 0 by a unique path of steepest decent, orthogonal to the level curves of $f_z(t)$.  Points in the $\frac{1}{\sqrt{z}}$-basin and $-\frac{1}{\sqrt{z}}$-basin can be similarly joined to $\frac{1}{\sqrt{z}}$ and $-\frac{1}{\sqrt{z}}$ respectively.\\

The point 1 will be located in either the 0-basin or the $\frac{1}{\sqrt{z}}$-basin, but not in the $-\frac{1}{\sqrt{z}}$-basin.  Multiplying each point in the $t$-plane by $\frac{\sqrt{z}}{|z|}$ rotates the figure so that all three zeros of $f_z(t)$ move to the real axis and the lines through the saddle-points $-\frac{1}{\sqrt{3z}}$ and $\frac{1}{\sqrt{3z}}$ are carried to the vertical lines through $-\frac{1}{\sqrt{3}|z|}$ and $\frac{1}{\sqrt{3}|z|}$ respectively.\\

The point 1 will then be in the $\frac{1}{\sqrt{z}}$-basin if and only if $\textrm{Re}\left(\frac{\sqrt{z}}{|z|}\right) > \frac{1}{\sqrt{3}|z|}$ which is equivalent to the condition $\textrm{Re}\left(\sqrt{z}\right) > \frac{1}{\sqrt{3}}.$ Similarly, the point 1 will then be in the 0-basin if and only if $\frac{-1}{\sqrt{3}}< \textrm{Re}\left(\sqrt{z}\right) < \frac{1}{\sqrt{3}}$ or, more precisely, $0< \textrm{Re}\left(\sqrt{z}\right) < \frac{1}{\sqrt{3}}$.
\end{proof}

\section{The region $\textrm{Re}(z)<\frac{1}{3}$}\label{zlthanthird}
\setcounter{equation}{0}

We consider the two possibilities illustrated in Theorem \ref{basins} separately (refer to Figure 1).  The first is the case where $\frac{-1}{\sqrt{3}}< \textrm{Re}\left(\sqrt{z}\right) < \frac{1}{\sqrt{3}}.$  This section of the $z$-plane is the area to the left of the parabola with vertex $\frac{1}{3}$  and intercepts $\frac{2}{3}i$ and $-\frac{2}{3}i$.  Thus all points $z$ satisfying $\frac{-1}{\sqrt{3}}< \textrm{Re}\left(\sqrt{z}\right) < \frac{1}{\sqrt{3}}$ will lie to the left of the vertical line Re$(z)=\frac{1}{3}$.\\

We shall prove that no zeros of ${}_2F_1\left(-n,\frac{n+1}{2};\frac{n+3}{2};z \right)$ are possible for Re$(z)<\frac{1}{3}$ and, therefore, no zeros of ${}_2F_1\left(-n,\frac{n+1}{2};\frac{n+3}{2};z \right)$ can occur for $\frac{-1}{\sqrt{3}}< \textrm{Re}\left(\sqrt{z}\right) < \frac{1}{\sqrt{3}}$.

\begin{thrm}\label{leftregion}
For sufficiently large $n$, the polynomial ${}_2F_1\left(-n,\frac{n+1}{2};\frac{n+3}{2};z \right)$ has no zeros in the region Re$(z)<\frac{1}{3}$.
\end{thrm}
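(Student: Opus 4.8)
The plan is to show that for $\mathrm{Re}(z)<\frac13$ the integral representation \eqref{smiley} cannot vanish, because one endpoint of the integration path dominates. By Lemma \ref{basins}, when $0<\mathrm{Re}(\sqrt z)<\frac{1}{\sqrt3}$ (equivalently, as noted at the start of Section \ref{zlthanthird}, whenever $\mathrm{Re}(z)<\frac13$ with $z$ to the left of the parabola) there is a unique path of steepest ascent of $|f_z(t)|$ from $0$ to $1$, and along this path $|f_z(t)|$ increases monotonically, so $|f_z(1)|>|f_z(t)|$ for every $t$ strictly between. First I would deform the segment $[0,1]$ in \eqref{smiley} onto this steepest-ascent path $\gamma$ (legitimate since $[t(1-zt^2)]^n$ is entire in $t$ and the endpoints $0,1$ are fixed), so that
\[
{}_2F_1\!\left(-n,\tfrac{n+1}{2};\tfrac{n+3}{2};z\right)=(n+1)\int_\gamma \left[f_z(t)\right]^n\,dt .
\]
The key point is that on $\gamma$ the maximum of $|f_z(t)|$ is attained only at the endpoint $t=1$, and there $f_z(1)=1-z\neq 0$ for $\mathrm{Re}(z)<\frac13$ (indeed $z=1$ is excluded).

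Next I would carry out a Laplace/Watson-type asymptotic analysis of this integral near the dominant endpoint $t=1$. Since $f_z'(1)=1-3z\neq 0$ when $\mathrm{Re}(z)<\frac13$, the endpoint is a simple (non-stationary) maximum of $|f_z|$ along $\gamma$, and the standard endpoint contribution gives
\[
(n+1)\int_\gamma \left[f_z(t)\right]^n\,dt
= (n+1)\,\frac{f_z(1)^{\,n+1}}{-\,n\,f_z'(1)}\,\bigl(1+o(1)\bigr)
= -\,\frac{(1-z)^{\,n+1}}{1-3z}\,\bigl(1+o(1)\bigr),
\]
uniformly on compact subsets of $\{\mathrm{Re}(z)<\frac13\}$ (after excising the zero-free disk near the origin handled by the boundedness of the tail). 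In particular the leading term is nonzero, so for $n$ large enough the whole expression is nonzero, giving the claim on compact sets.

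To upgrade from compact sets to the full region $\mathrm{Re}(z)<\frac13$, I would combine two facts already available: Lemma \ref{enestromk} confines all zeros to $|z|<n+1$, which is not uniform, so instead I would use a cruder direct estimate on $\int_0^1|t(1-zt^2)|^n\,dt$ for $|z|$ large — there $|1-zt^2|$ behaves like $|z|t^2$ on most of $[0,1]$ and one gets a clean lower bound of order $|z|^n/n$ against which the missing mass near $t=0$ is negligible — to rule out large-modulus zeros in the left half-region directly. Alternatively one observes that for $\mathrm{Re}(z)\le 0$ one can even work on the segment $[0,1]$ itself, since then $|1-zt^2|\ge 1$, forcing $|{}_2F_1|\ge$ (a small but positive quantity); this disposes of a neighbourhood of infinity and of the left half-plane, leaving only a bounded strip $\{0\le \mathrm{Re}(z)<\frac13,\ |z|\le R\}$ to be covered by the compact-set argument above.

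The main obstacle is the uniformity of the steepest-descent estimate as $z$ approaches the critical parabola $\mathrm{Re}(\sqrt z)=\frac{1}{\sqrt3}$, i.e.\ as $\mathrm{Re}(z)\to\frac13$: there the saddle point $t=\tfrac{1}{\sqrt{3z}}$ collides with the geometry of the path and $f_z'(1)=1-3z$ degenerates, so the endpoint expansion loses its uniformity and one must check that the error term stays controlled on compact subsets strictly inside $\mathrm{Re}(z)<\frac13$ — which is exactly what is needed, since the theorem only asserts absence of zeros in the open region. I would therefore state the endpoint asymptotics with explicit dependence on the distance to the parabola and verify that the correction is $o(1)$ on each such compact set, which suffices to conclude that ${}_2F_1\left(-n,\frac{n+1}{2};\frac{n+3}{2};z\right)$ has no zeros with $\mathrm{Re}(z)<\frac13$ once $n$ is large.
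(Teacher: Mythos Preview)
Your strategy---deform to the steepest-ascent path from $0$ to $1$ guaranteed by Lemma~\ref{basins} and show that the endpoint $t=1$ dominates---is exactly the paper's. The paper simply carries it out by hand rather than by quoting Watson's lemma: it makes the substitution $f_z(t)=f_z(1)\,r$ (so $t(1-zt^2)=r(1-z)$, $0\le r\le 1$), turning the polynomial into $(n+1)(1-z)^{n+1}\int_0^1 \frac{r^n}{1-3zt^2}\,dr$, and then shows this integral is bounded away from $0$ by splitting at $r=\rho$ and using a M\"obius-map observation to get $\mathrm{Re}\!\left\{\dfrac{(1-zt^2)t}{1-3zt^2}\right\}>\tfrac16$ for $r$ near $1$. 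That is precisely the mechanism behind your endpoint asymptotic $-\dfrac{(1-z)^{n+1}}{1-3z}\bigl(1+o(1)\bigr)$, made explicit, and like you the paper works under the standing restriction $|z-\tfrac13|\ge\epsilon$.

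Two remarks on your side arguments. First, your dismissal of Lemma~\ref{enestromk} as ``not uniform'' is misplaced: the paper uses $|z|<n+1$ at a putative zero to bound $|(1-zt^2)t|\le Cn$ along the path, so the tail estimate becomes $n\int_0^\rho\cdots\le Cn\rho^n\to 0$; this is exactly how the unbounded part of the region is handled, and no separate large-$|z|$ argument is needed. Second, your alternative for $\mathrm{Re}(z)\le 0$ does not work as written: the pointwise bound $|1-zt^2|\ge 1$ on $[0,1]$ gives $|f_z(t)|^n\ge t^n$, but this does \emph{not} bound $\bigl|\int_0^1 [f_z(t)]^n\,dt\bigr|$ from below, since the integrand is complex and cancellation can occur. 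Drop that alternative and use the Enestr\"om--Kakeya route, which already covers everything.
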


\begin{proof} From (\ref{smiley}), we know that 
\begin{align*}
{}_2F_1\left(-n,\frac{n+1}{2};\frac{n+3}{2};z\right) &= 
(n+1)\int^{1}_{0} \left[f_z(t)\right]^n dt\\
&= (n+1) \int^{1}_{0} \left[t(1-zt^2)\right]^n dt.
\end{align*}

Lemma \ref{basins} ensures that for Re$(z)<\frac{1}{3}$, there is a unique path of steepest ascent from 0 to 1.  We may thus evaluate the integral involved in (\ref{smiley}) over this path.  In order to find the path of steepest ascent, we use that fact that $f_z(t)=t(1-zt^2)$ will have constant argument along this path (cf. \cite{copson}) so that we can parametrise the path by letting $$f_z(t)=f_z(1)r \qquad 0\leq r\leq 1,$$ or equivalently
\begin{equation}\label{implicit} 
t(1-zt^2)=r(1-z). 
\end{equation}

Then $$(1-3zt^2)dt=(1-z)dr$$ and our hypergeometric polynomial can be rewritten as $${}_2F_1\left(-n, \frac{n+1}{2}; \frac{n+3}{2}; z\right)=(n+1)(1-z)^{n+1} \int^{1}_{0} \frac{r^n}{1-3zt^2} dr$$

where $t=t(r)$ is defined implicitly by (\ref{implicit}), with $t(0)= 0$ and $t(1)=1$.\\

Any zeros $z=z_{nj}$ of ${}_2F_1\left(-n, \frac{n+1}{2}; \frac{n+3}{2}; z \right)$ in the region Re$(z)<\frac{1}{3}$ must satisfy $$(n+1)(1-z)^{n+1} \int^{1}_{0} \frac{r^n}{1-3zt^2} dr=0,$$ or equivalently, using (\ref{implicit})

\begin{equation}\label{negzeroint}
n \int^{1}_{0} \frac{(1-zt^2)tr^{n-1}}{1-3zt^2} dr=0.
\end{equation}

\bigskip
We will prove that the integral in (\ref{negzeroint}) is bounded away from 0 and hence deduce that no zeros of ${}_2F_1\left(-n,\frac{n+1}{2};\frac{n+3}{2};z\right)$ can lie in the half-plane Re$(z)<\frac{1}{3}$.\\

If the zeros are restricted by the inequality $\left| z- \frac{1}{3} \right| \geq \epsilon$ for some $\epsilon >0$, then the denominator of the integrand in (\ref{negzeroint}) satisfies $\left|1-3zt^2\right|\geq \delta >0$, where $\delta$ is independent of $z$.  Thus for any fixed $\rho$ with $0<\rho<1$, we have 

\begin{align*}
n\left|\int^{\rho}_{0} \frac{(1-zt^2)tr^{n-1}}{1-3zt^2} dr\right| & \leq n\int^{\rho}_{0} \frac{|(1-zt^2)t|}{|1-3zt^2|}\; r^{n-1} dr\\
& \leq Cn\int^{\rho}_{0} |(1-zt^2)t|\; r^{n-1} dr
\end{align*}

since $\frac{1}{\left|1-3zt^2\right|}\leq C$ for some constant $C$.\\

\bigskip
Lemma \ref{enestromk} states that the zeros of ${}_2F_1\left(-n,\frac{n+1}{2};\frac{n+3}{2};z\right)$ are contained in the disk $|z|<n+1$.  Thus 

\begin{align}
n\left|\int^{\rho}_{0} \frac{(1-zt^2)tr^{n-1}}{1-3zt^2} dr\right| & \leq Cn\int^{\rho}_{0} |(1-zt^2)t|\; r^{n-1} dr\nonumber\\
& \leq Cn^2\; \int^{\rho}_{0} r^{n-1} dr\nonumber\\
& \leq Cn\; \rho^{n} \rightarrow 0\label{0torho}
\end{align}

as $n\rightarrow \infty$ since $0<\rho <1$.  On the other hand, for $\rho$ sufficiently close to 1, the integral 
\begin{equation}\label{rhoto1}
n \int_{\rho}^{1} \frac{(1-zt^2)tr^{n-1}}{1-3zt^2} dr
\end{equation}

is bounded away from zero.  To prove this, we first note that since the path $t=t(r)$ must lie on the same side of the ``continental divide'' as the point 1 (cf. proof of Lemma \ref{basins}), we know that Re$(zt^2)<\frac{1}{3}$.  Our restriction on $z$ further ensures that $\left|zt^2-\frac{1}{3}\right|>\frac{\epsilon}{2}$ for $t$ sufficiently near 1.\\

\bigskip
Now the linear fractional mapping $$\omega= \phi (\zeta)= \frac{1-\zeta}{1-3\zeta}$$ sends the region $$\left\{\zeta: \textrm{Re} (\zeta)<\frac{1}{3}, \left|\zeta - \frac{1}{3} \right| >\frac{\epsilon}{2}\right\}$$ onto a semidisk to the right of the vertical line Re$(\omega)=\frac{1}{3}$.  It follows that 

\[\textrm{Re}\left\{\frac{(1-zt^2)t}{1-3zt^2}\right\}>\frac{1}{6}\]

when $t$ is close enough to 1.  This shows that

\[\textrm{Re}\left\{n\int_{\rho}^{1} \frac{(1-zt^2)tr^{n-1}}{1-3zt^2}\; dr\right\}>\frac{n}{6}\int_{\rho}^{1} r^{n-1}\; dr > \frac{1}{12}\]

for $\rho$ near 1 and all $z$ satisfying Re$(z)<\frac{1}{3}$. Combining this with (\ref{0torho}), we see that for sufficiently large $n$, the polynomial ${}_2F_1\left(-n,\frac{n+1}{2};\frac{n+3}{2};z\right)$ can have no zeros in the region Re$(z)<\frac{1}{3}$.
\end{proof}

\bigskip
Thus, if any zeros exist in the region Re$(z)\leq \frac{1}{3}$, they must converge uniformly to the point $\frac{1}{3}$ as $n\rightarrow\infty$ (since we had the additional restriction of $\left|z-\frac{1}{3}\right|>\epsilon$ for some $\epsilon>0$).  In common with the analysis in \cite{durenguillou}, we are unable to show that the polynomial never has zeros in this region, although numerical evidence generated by Mathematica suggests that this is the case.\\

\section{The asymptotic zero distribution in the region Re$(z)>\frac{1}{3}$}
\setcounter{equation}{0}

We know from section \ref{zlthanthird} that for $n$ sufficiently large, the zeros of ${}_2F_1\left(-n,\frac{n+1}{2};\frac{n+3}{2};z\right)$ lie in the region Re$(z)>\frac{1}{3}$.\\

\medskip
From Lemma \ref{lastlem}, we know that at least one of the zeros of ${}_2F_1\left(-n,\frac{n+1}{2};\frac{n+3}{2};z\right)$ lies outside the unit circle $|z|=1$.  Since there are no zeros to the left of Re$(z)=\frac{1}{3}$, we know that this polynomial has at least one zero in the region Re$(z)>\frac{1}{3}$.\\

\medskip
We are now in a position to prove our theorem.

\begin{thrm}\nonumber
The zeros of the hypergeometric polynomial $${}_2F_1\left(-n, \frac{n+1}{2}; \frac{n+3}{2};z \right)$$ approach the section of the lemniscate $$\left\{z: \left|z(1-z)^2\right|=\frac{4}{27}; \textrm{Re}(z)>\frac{1}{3}\right\},$$ as $n\rightarrow \infty$.
\end{thrm}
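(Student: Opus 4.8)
The plan is to carry out a steepest-descent (Laplace) analysis of $(n+1)\int_0^1\bigl[f_z(t)\bigr]^n\,dt$ in the half-plane $\textrm{Re}(z)>\tfrac13$, where Theorem~\ref{leftregion} has already localised all zeros for $n$ large. Write the integrand as $e^{n\log f_z(t)}$. The features that govern the asymptotics are the endpoints $t=0$ (where $f_z=0$) and $t=1$ (where $f_z(1)=1-z$ and $f_z'(1)=1-3z$, nonzero for $z\neq\tfrac13$), together with the saddle point $t=\tfrac1{\sqrt{3z}}$, at which $f_z\bigl(\tfrac1{\sqrt{3z}}\bigr)=\tfrac{2}{3\sqrt{3z}}$ and $(\log f_z)''=-9z\neq0$. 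Since $\textrm{Re}(z)>\tfrac13$, Lemma~\ref{basins} places $1$ in the $\tfrac1{\sqrt z}$-basin, so the segment $[0,1]$ crosses the continental divide through $\tfrac1{\sqrt{3z}}$; I would deform it (legitimately, as $f_z$ is entire) to the chain of three steepest paths visible in Figure~\ref{contour}: steepest ascent from $0$ to the saddle $\tfrac1{\sqrt{3z}}$, steepest descent from the saddle to the zero $\tfrac1{\sqrt z}$, and the steepest-ascent path from $\tfrac1{\sqrt z}$ to $1$ supplied by Lemma~\ref{basins}. Along this chain $f_z$ vanishes at both $t=0$ and $t=\tfrac1{\sqrt z}$, so only two features carry positive mass.

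Applying Watson's lemma at the endpoint $t=1$, the classical saddle-point formula at $t=\tfrac1{\sqrt{3z}}$ (where the first two segments together form the steepest-descent contour through the saddle), and the trivial bound $O(\rho^n)$, $\rho<1$, near the two zeros of $f_z$, one expects to obtain, uniformly on compact subsets of $\{\textrm{Re}(z)>\tfrac13\}\setminus\{\tfrac13\}$,
\[
{}_2F_1\left(-n,\tfrac{n+1}{2};\tfrac{n+3}{2};z\right)
=(n+1)\left[\frac{(1-z)^{n+1}}{n(3z-1)}\bigl(1+O(1/n)\bigr)
+\frac{c(z)}{\sqrt{n}}\left(\frac{2}{3\sqrt{3z}}\right)^{n}\bigl(1+O(1/n)\bigr)\right],
\]
where $c(z)$ is an explicit nowhere-vanishing holomorphic function on $\textrm{Re}(z)>0$ (essentially $\pm\sqrt{2\pi/(9z)}$, the sign fixed by the orientation of the contour at the saddle). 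The two exponential rates are $|f_z(1)|=|1-z|$ and $\bigl|f_z(\tfrac1{\sqrt{3z}})\bigr|=\tfrac{2}{3\sqrt3}\,|z|^{-1/2}$, and $|1-z|>\tfrac{2}{3\sqrt3}|z|^{-1/2}$ exactly when $|z(1-z)^2|>\tfrac4{27}$ — the elementary equivalence recorded at the start of the proof of Lemma~\ref{basins}. Thus the lemniscate is precisely the locus on which the two rates coincide.

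The theorem then follows. If $z$ lies in a compact subset of $\{\textrm{Re}(z)>\tfrac13\}$ strictly outside the lemniscate, the endpoint term dominates (note $z\neq1$ and $z\neq\tfrac13$ there), so ${}_2F_1\left(-n,\tfrac{n+1}{2};\tfrac{n+3}{2};z\right)\neq0$ for $n$ large; if $z$ lies strictly inside — a region which is a single loop enclosing $z=1$ and meeting $\textrm{Re}(z)=\tfrac13$ only at $z=\tfrac13$, hence contained in $\textrm{Re}(z)>\tfrac13$ — the saddle term dominates and again there are no zeros. Combined with Theorem~\ref{leftregion} and a separate cruder estimate excluding zeros of large modulus (via the same integral, since $|f_z(1)|$ overwhelms $\bigl|f_z(\tfrac1{\sqrt{3z}})\bigr|$ when $|z|$ is large), this shows that every accumulation point of the zeros lies on the closed arc $\{|z(1-z)^2|=\tfrac4{27},\ \textrm{Re}(z)\ge\tfrac13\}$, which is nonempty as a locus of limit points by Lemma~\ref{lastlem} together with Theorem~\ref{leftregion}. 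For the reverse inclusion I would run the argument principle on a thin closed loop lying just inside the lemniscate, at distance $\asymp\tfrac{\log n}{n}$ (where the extra factor $\sqrt n$ lets the endpoint term catch up to the saddle term in modulus): there ${}_2F_1\left(-n,\tfrac{n+1}{2};\tfrac{n+3}{2};z\right)$ is a nonvanishing factor times $\bigl(f_z(1)/f_z(\tfrac1{\sqrt{3z}})\bigr)^{n}$ plus a slowly varying term, and since $f_z(1)=1-z$ vanishes at the enclosed point $z=1$ while $f_z(\tfrac1{\sqrt{3z}})=\tfrac{2}{3\sqrt{3z}}$ is analytic and zero-free on and inside the loop, $\arg\bigl(f_z(1)/f_z(\tfrac1{\sqrt{3z}})\bigr)$ changes by $-2\pi$ around it; hence the argument of the full expression winds about $n$ times and produces about $n$ zeros distributed densely around the loop, filling the arc as $n\to\infty$.

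The main obstacle is uniformity. One must show that the three steepest paths vary continuously with $z$ and remain uniformly separated — from one another and from the inactive saddle $-\tfrac1{\sqrt{3z}}$ — on compact $z$-sets bounded away from $z=\tfrac13$, and then derive the error terms in the displayed expansion uniformly there; this uniform control is exactly what the winding-number count in the density step relies on. The coalescence of the endpoint $t=1$ with the saddle $t=\tfrac1{\sqrt{3z}}$ at $z=\tfrac13$, where the scheme degenerates, never needs to be analysed: $z=\tfrac13$ already lies on the closed arc and, being a limit of the controlled arc points, does not affect the closure statement, while Theorem~\ref{leftregion} separately controls a full neighbourhood of the line $\textrm{Re}(z)=\tfrac13$. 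Everything else is bookkeeping with Watson's lemma and the saddle-point formula, as in \cite{durenguillou}.
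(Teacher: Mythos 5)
Your containment argument is, in substance, the paper's own proof: the representation (\ref{smiley}), the deformation of $[0,1]$ onto the steepest paths through the saddle $\tfrac{1}{\sqrt{3z}}$ and the zero $\tfrac{1}{\sqrt{z}}$, and the comparison of the endpoint value $|1-z|$ with the saddle value $\tfrac{2}{\sqrt{27}}|z|^{-1/2}$, which balance exactly on $|z(1-z)^2|=\tfrac{4}{27}$. The differences there are bookkeeping: the paper integrates over the straight segment from $0$ to $\tfrac{1}{\sqrt{z}}$ exactly (a Beta integral, then Stirling), keeps the piece from $\tfrac{1}{\sqrt z}$ to $1$ as the exact parametrized integral $(1-z)^n\int_0^1\frac{(1-zt^2)t\,r^{n-1}}{1-3zt^2}\,dr$, and finishes by taking moduli and $n$-th roots of the zero equation (\ref{gzeros}), using only that the $n$-th root of that integral tends to $1$; this sidesteps the Watson's-lemma expansion at $t=1$ and most of the uniformity worries you list (your $3z-1$ should be $1-3z$, which is immaterial).

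The genuine flaw is in your argument-principle step: you have placed the critical contour, and in effect the zeros, on the wrong side of the curve. Since the saddle contribution carries the factor $n^{-1/2}$ while the endpoint contribution is of size $|1-z|^{n+1}/n$, the two terms are comparable only where $\bigl|\tfrac{3\sqrt{3z}}{2}(1-z)\bigr|^n\asymp\sqrt{n}$, i.e.\ where $|z(1-z)^2|\approx\tfrac{4}{27}\bigl(1+O(\tfrac{\log n}{n})\bigr)$ from \emph{outside}; this is also what (\ref{gzeros}) shows, since its right-hand side decays like $n^{-1/2}\bigl(\tfrac{2}{\sqrt{27}}\bigr)^n$ while the integral on the left is of order $\tfrac1n$, forcing $|\sqrt z(1-z)|\approx\tfrac{2}{\sqrt{27}}(c\sqrt n)^{1/n}>\tfrac{2}{\sqrt{27}}$ at the zeros. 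On a loop just \emph{inside} the lemniscate the endpoint term does not ``catch up''; it is smaller than the saddle term by a further factor $n^{-1/2}|q|^n$ with $|q|<1$, so your function there is a small perturbation of the saddle term, which is zero-free and whose net argument change around a loop encircling $z=1$ but not $z=0$ is essentially zero (only $(\sqrt z)^{-n}$ could wind, and $z=0$ is not enclosed). Hence your contour yields winding about $0$, not about $n$, and the density conclusion does not follow as written. The repair is to swap the roles: take an outer contour far enough outside the arc that $(1-z)^{n+1}$ dominates, where the argument winds about $n+1$ times around the enclosed point $z=1$, and an inner contour just inside the arc, where the saddle term dominates and the winding is $O(1)$; the difference traps roughly $n$ zeros in the thin band just outside the lemniscate. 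Note that this reverse inclusion goes beyond the paper, which only remarks informally on the $n$ choices of $\sqrt[n]{-1}$ in (\ref{gzeros}); your main containment statement, like the paper's, is unaffected by this error.
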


\begin{proof}
From (\ref{smiley}), we know that $${}_2F_1\left(-n,\frac{n+1}{2};\frac{n+3}{2};z\right) = (n+1) \int^{1}_{0} \left[t(1-zt^2)\right]^n dt.$$ Lemma \ref{basins} guarantees that there is a unique path of steepest ascent from $\frac{1}{\sqrt{z}}$ to 1.  We may thus deform the path of integration in (\ref{smiley}) to write

\begin{equation}\nonumber
\int^{1}_{0} \left[f_z(t)\right]^n dt = \int^{\frac{1}{\sqrt{z}}}_{0} \left[f_z(t)\right]^n dt +\int^{1}_{\frac{1}{\sqrt{z}}} \left[f_z(t)\right]^n dt
\end{equation}

following the linear path from 0 to $\frac{1}{\sqrt{z}}$ and then the unique path of steepest ascent from $\frac{1}{\sqrt{z}}$ to 1.  The linear path from 0 to $\frac{1}{\sqrt{z}}$ is orthogonal to the level curves of $f_z(t)$ and is therefore the path of steepest ascent from 0 to the saddle-point $\frac{1}{\sqrt{3z}}$, followed by the path of steepest descent to $\frac{1}{\sqrt{z}}$.\\

\medskip
Any zero $z=z_{nj}$ of ${}_2F_1\left(-n, \frac{n+1}{2}; \frac{n+3}{2}; z \right)$ in the region Re$(z)>\frac{1}{3}$ must satisfy

\begin{equation}\label{gintf}
\int^{\frac{1}{\sqrt{z}}}_{0} \left[f_z(t)\right]^n dt +\int^{1}_{\frac{1}{\sqrt{z}}} \left[f_z(t)\right]^n dt=0
\end{equation}

\medskip
where the integrals are taken over paths of steepest ascent or descent.\\

Making the substitution $s=zt^2$ for $0\leq s\leq 1$, we obtain

\begin{align}
\int^{\frac{1}{\sqrt{z}}}_{0} \left[f_z(t)\right]^n dt &= \int^{\frac{1}{\sqrt{z}}}_{0} \left[t(1-zt^2) \right]^n dt\nonumber\\
&= \frac{1}{2(\sqrt{z})^{n+1}} \int^{1}_{0} s^{(n+1)/2} (1-s)^n ds\nonumber\\
&= \frac{1}{2(\sqrt{z})^{n+1}} \; \frac{\Gamma\left( \frac{n+1}{2}\right) \Gamma(n+1)}{\Gamma \left(\frac{3n+3}{2}\right)}.\nonumber
\end{align}

\bigskip
Using Stirling's approximation $$\Gamma (n+1) = e^{-n} n^n \sqrt{2\pi n}\left(1+\frac{1}{12n}+\frac{1}{288n^2}+O\left(\frac{1}{n^3}\right)\right), \qquad n\rightarrow \infty,$$ we then have $$\int^{\frac{1}{\sqrt{z}}}_{0} \left[f_z(t)\right]^n dt = \frac{\sqrt{2\pi}}{3\sqrt{n}(\sqrt{z})^{n+1}} \left(\frac{2}{\sqrt{27}}\right)^n\left[1 + O\left(\frac{1}{n}\right)\right],$$

\bigskip
as $n\rightarrow \infty$.  The second integral on the right-hand side of (\ref{gintf}) requires that we find the path of steepest ascent from $\frac{1}{\sqrt{z}}$ to 1.  We again use the fact that $f_z(t)$ will have constant argument along this path (cf. \cite{copson}) so that we can parametrise this path by letting $$f_z(t)=f_z(1)r \qquad 0\leq r\leq 1,$$ yielding $$\int^{1}_{\frac{1}{\sqrt{z}}} \left[f_z(t)\right]^n dt = (1-z)^{n} \int^{1}_{0} \frac{(1-zt^2)tr^{n-1}}{1-3zt^2} dr$$

where $t=t(r)$ is defined implicitly by (\ref{implicit}), with $t(0)= \frac{1}{\sqrt{z}}$ and $t(1)=1$.  Therefore, any zero $z=z_{nj}$ of ${}_2F_1\left(-n, \frac{n+1}{2}; \frac{n+3}{2}; z \right)$ in the region Re$(z)>\frac{1}{3}$ must asymptotically satisfy $$\left(\frac{2}{\sqrt{27}}\right)^n \; \frac{\sqrt{2\pi}}{3\sqrt{n}(\sqrt{z})^{n+1}}\left\{ 1+O\left(\frac{1}{n}\right)\right\} +(1-z)^{n} \int_{0}^{1} \frac{(1-zt^2)tr^{n-1}}{1-3zt^2} dr = 0,$$ as $n\rightarrow \infty$, or equivalently

\begin{equation}\label{gzeros}
(\sqrt{z})^{n+1}(1-z)^{n} \int_{0}^{1} \frac{(1-zt^2)tr^{n-1}}{1-3zt^2} dr = -\left(\frac{2}{\sqrt{27}}\right)^n \; \frac{\sqrt{2\pi}}{3\sqrt{n}}\left\{1+O\left(\frac{1}{n}\right)\right\}.
\end{equation}

\medskip
as $n\rightarrow \infty$.  Taking moduli and $n^{\textrm{th}}$ roots on both sides of (\ref{gzeros}), we obtain $$|\sqrt{z}|^{\frac{1}{n}} \left|\sqrt{z}(1-z)\right| \left|\int_{0}^{1} \frac{(1-zt^2)tr^{n-1}}{1-3zt^2} dr\right|^{\frac{1}{n}} = \left|\frac{2}{\sqrt{27}}\right| \; \left(\frac{\sqrt{2\pi}}{3\sqrt{n}}\right)^{\frac{1}{n}} \left\{1+ O\left(\frac{1}{n}\right)\right\}^{\frac{1}{n}}.$$

\medskip
It is straightforward to check that, as $n\rightarrow \infty$, $$\left|\int_{0}^{1} \frac{(1-zt^2)tr^{n-1}}{1-3zt^2} dr\right|^{1/n}$$ converges to 1 uniformly in $z$ and the zeros $z=z_{nj}$ of ${}_2F_1\left(-n,\frac{n+1}{2};\frac{n+3}{2};z\right)$ in the region Re$(z)>\frac{1}{3}$ approach the lemniscate $$\left|\sqrt{z}(1-z)\right|=\frac{2}{\sqrt{27}}$$ or equivalently \[\left|z(1-z)^2\right|=\frac{4}{27}.\qedhere \]
\end{proof}

\medskip
In addition, we note that by taking $n^{\textrm{th}}$ roots on both sides of (\ref{gzeros}), for large $n$, there are $n$ points satisfying (\ref{gzeros}), distinguished by the $n$ choices of $\sqrt[n]{-1}$.  All of these points are zeros of the polynomial, spread out near the right-hand branch of the lemniscate.\\

\newpage
Figure \ref{lemniscates} shows numerical plotting of the zeros for $n$ ranging from 5 to 60.

\begin{figure}[!hbt]
\includegraphics[scale=0.7]{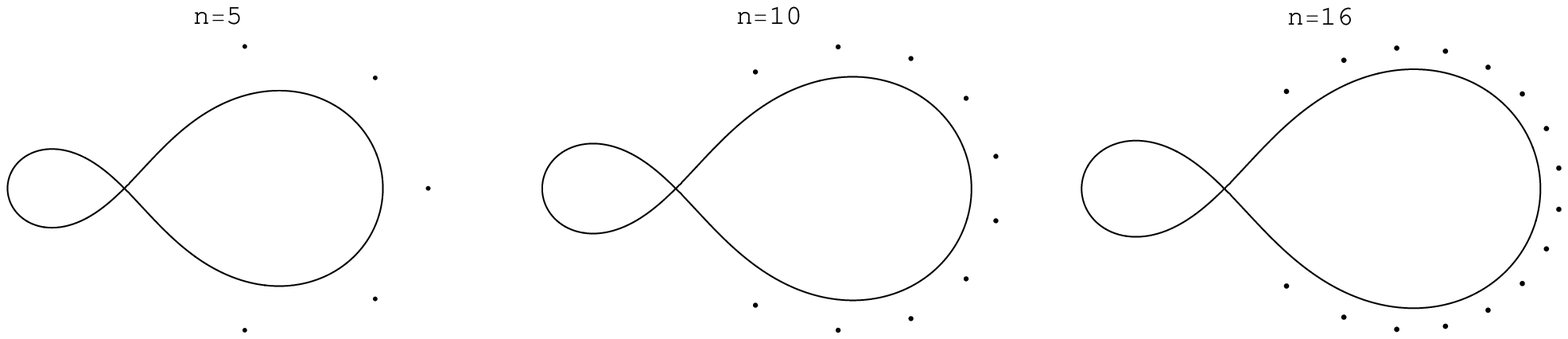}
\includegraphics[scale=0.65]{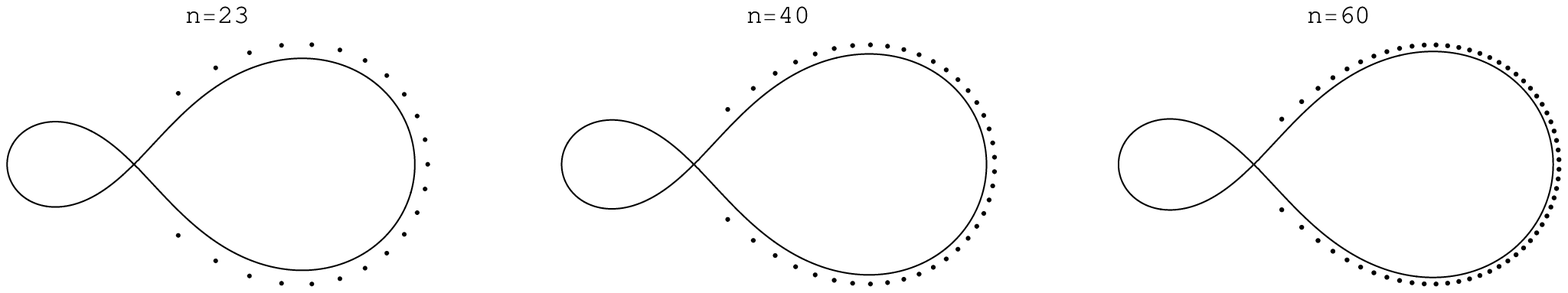}
\caption{The curve $\left|z(1-z)^2\right|=\frac{4}{27}$ and the zeros of ${}_2F_1\left( -n,\frac{n+1}{2};\frac{n+3}{2};z\right)$ for $n=5, 10, 16, 23, 40, 60.$\label{lemniscates}}
\end{figure}

\end{document}